\def \fixequation{\let \c@equation\c@theorem\let \p@equation\p@theorem\let \theequation \thetheorem}
\makeatother \theoremstyle{plain}
\newtheorem{theorem}{Theorem}
\newtheorem{lemma}{Lemma}
\newtheorem{claim}{Claim}
\theoremstyle{definition}
\theoremstyle{remark}
\begin{document}

\def\s{\sigma}
\def\A{\mathcal{A}}
\def\B{\mathcal{B}}
\def\ga{\gamma}
\def\La{\Lambda}
\def\la{\lambda}
\def\p{\phi}
\def\ep{\epsilon}
\def\sg{\sigma}
\def\al{\alpha}
\def\gp{\frak P}
\def\gq{\frak q}
\def\gz{\frak Z}
\def\gs{\frak S}
\def\gg{\frak g}
\def\kin{k \in [1,n]}
\def\kim{k \in [1,m]}
\def\ss{s^k_1, s^k_2}
\def\Jin{J \subset [1,n]}
\def\ts{\tilde{s}}
\def\mN{\mathbb{N}^+}
\def\mz{\mathbb{Z}}
\def\mp{\mathbb{P}}
\def\mzt{\mathbb{Z}_2}
\def\Cb{\mathbf{C}}
\def\bE{\mathbf{E}}
\def\qs{q_{\mbox{SAT}}}
\def\sbe{\mbox{SAT}(\mathbf{E})}
\def\rx{R[X]}
\def\rxo{R[X_0]}
\def\on{\otimes^N \mathbb{C}^n}
\def\onn{\otimes^n \mathbb{C}^n}
\def\one{\otimes^N \mathbb{C}^n(\eta)}
\def\noo{(v_{i_1} \otimes \cdots \otimes v_{i_N})}
\def\GL{GL_n(\mathbb{C}}
\def\sn{S_N(U_n)}
\def\mbc{\mathbb{C}}
\begin{center} {\bf \Large \  diagonals of real symmetric matrices
of given spectra as a measure space}\\
 Avital Frumkin and  Assaf Goldberger

\end{center}
\subsection*{\large \bf Abstract }
The set of diagonals of real  symmetric matrices of given  non negative  spectrum is endowed with a measure which is obtained  by the push forward of the Haar measure of the  real orthogonal group.\\
We prove that the Radon Nicodym derivation of this measure with respect to the relative Euclidean measure   is  approximated  by the coefficients of a sequence of zonal sphere polynomials corresponding with the given spectrum.
 There is a striking similarity between the role of the  zonal sphere polynomials in  the orthogonal case, and that of the  Schur function  in the Hermitian case.\\
Following  this  we obtain a combinatorial approximation for the probability of  real symmetric matrix of a given spectrum to appear as the sum of two real symmetric matrices, each of a given spectrum. In addition  we obtain  a   real orthogonal  analogue to the Zuber Itzykson Harish Chandra integration formula.

\subsection*{\large \bf Preliminaries and main result }
    We use Greek  letters to denote  partitions of unity, \\ $\delta= \delta_1,\delta_2...\delta_n $ where $ \delta_i\geq 0$ and $\sum_i{\delta_i} =1$, as well as for partitions of integers,  $\gamma\vdash N  $.

 Let $\lambda=(\lambda_1,\lambda_2,....,\lambda_n)$ be a positive
real
vector so that $\sum \lambda_i =1$\\

 For simplicity, we
assume that all  $\lambda_i$ are rational.\\
 Define a diagonal matrix by
 \begin{equation}
D_\lambda =diag(\lambda_1,\lambda_2,...,\lambda_n)
\end {equation}
\\ Let $O_n$ be
the real orthogonal group of n by n matrices.\\
By the Horn-Schur theorem, the set \begin{equation}( {{(oD_\lambda
o^t})_{11},{(oD_\lambda o^t})_{22}, ...,{(oD_\lambda o^t})_{nn} / o\in O_n})\end {equation}
 is the
permutahedron $PH_\lambda$, defined by the vector $\lambda$. That is the convex hull
of all the vectors
$(\lambda_{\sigma(1)},\lambda_{\sigma(2)}...\lambda_{\sigma (n)})$
  for $\sigma\in S_n$. \\
  $PH_\lambda $ is endowed with a
 measure, given by pushing forward of the Haar
measure of the real orthogonal group. We denote this measure  by
$DH_\lambda^O$,
(In the Hermitian case $DH_\lambda^U$(see [FG]))\\
 We asumme that the Radon Nicodym derivation of $DH_\lambda^o$ with respect to the relative Euclidian measure of $PH_\lambda$ exists, and is  almost everywher contiuous. Furthermor
 the suspected points of the Radom Nicodym derivation function are included in severel afine spaces, their intersections with $PH_\lambda$, is of posetiv codimension.
  The proof will be given elswhere.\\

  {\bf Remark} As not as in our case,  the Radon Nicodym derivation of $DH_\lambda^U$ with respect to the relative Euclidian measure of $PH_\lambda$ can be  computed directly using representation theory, thereby  avoiding  any need for  differential geometry. See[F G] for more details.\\
    Let $\lambda$ be a partition of a positive integer  N ($\lambda\vdash N$). Further,let\\

 \begin{equation}Z_{\lambda}(X)=\sum_{\eta\vdash N} a_\eta X^\eta\end {equation}
 denote   the zonal sphere polynomial
  corresponding to the partition $\lambda$ (see [Mac] section 7 and [J 1])\\
   The next thorem  is our main result.
   \begin{theorem}
Let $\lambda=\lambda_1\leq \lambda_2...\leq \lambda_n$ be a non negative
rational vector so that $ \sum \lambda_i =1$, and let  N be a positive integer, such that
$N\lambda$ is a vector of integers.\\ Let
$Z_{N\lambda}(X)=\sum_{\eta\vdash N} a_\eta^N X^\eta$ be  the
corresponding zonal sphere polynomial. Assume  $\frac{\eta}{N}$ is a continuous point of the density function (Radon Nicodym derivation of $DH_\lambda^o$ with respact to the relativ Euclidian measure on $PH_\lambda$). Than as N tends to
infinity, $\frac{a_\eta^N}{Z_{N\lambda}(I_n)} $ ($I_n$ is the corresponding unit matrixe) approximates this density function   at
the point $\frac{\eta}{N}$ in $ PH_\lambda$.
\end{theorem}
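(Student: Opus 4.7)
The plan is to run a Duistermaat--Heckman/Kostka-type limit theorem in the orthogonal setting, with zonal polynomials taking the role that Schur functions play in the Hermitian case treated in [FG]. First, I would express the moments of $DH_\lambda^O$ as orthogonal integrals. By the defining push-forward, for every composition $\eta$ with $|\eta|=N$,
\begin{equation*}
\int_{PH_\lambda} y^\eta \, dDH_\lambda^O(y) \;=\; \int_{O_n} \prod_{i=1}^n (oD_\lambda o^t)_{ii}^{\eta_i}\,do,
\end{equation*}
so that applying $\sum_\eta \binom{N}{\eta} x^\eta$ to these moments yields $\int_{O_n}(\mathrm{tr}(D_x\,oD_\lambda o^t))^N\,do$. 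Invoking James's fundamental integration formula, this last integral admits a zonal expansion $\sum_{\mu\vdash N} c_\mu^N\, Z_\mu(D_x)\,Z_\mu(D_\lambda)$; matching coefficients of $x^\eta$ (and writing $a_\eta^{(\mu)}$ for the coefficient of $x^\eta$ in $Z_\mu$, so that $a_\eta^{(N\lambda)}=a_\eta^N$) gives
\begin{equation*}
\binom{N}{\eta}\int_{PH_\lambda} y^\eta f_\lambda^O(y)\,dy \;=\; \sum_{\mu\vdash N} c_\mu^N\, a_\eta^{(\mu)}\, Z_\mu(D_\lambda).
\end{equation*}

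Next, I would show that the right-hand side is dominated, as $N\to\infty$, by the single term $\mu=N\lambda$, so that the sum collapses to roughly $c_{N\lambda}^N\,a_\eta^N\,Z_{N\lambda}(D_\lambda)$ plus lower-order corrections. In parallel, on the left-hand side a multivariate local limit theorem for the multinomial distribution gives, whenever $\eta/N$ is a continuity point of $f_\lambda^O$,
\begin{equation*}
\binom{N}{\eta}\int y^\eta f_\lambda^O(y)\,dy \;\sim\; \frac{f_\lambda^O(\eta/N)}{N^{n-1}}\cdot(\text{explicit Jacobian factor}),
\end{equation*}
and the assumption that the singular set of $f_\lambda^O$ lies in finitely many affine subspaces of positive codimension, stated in the preliminaries, ensures that this local CLT applies uniformly on the relevant lattice of $\eta$. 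Dividing out by $Z_{N\lambda}(I_n)$, read off from the reproducing-kernel identity $Z_\mu(A)Z_\mu(B)=Z_\mu(I_n)\int_{O_n}Z_\mu(A\,oBo^t)\,do$ at $A=B=D_\lambda$, then isolates $a_\eta^N/Z_{N\lambda}(I_n)$ and identifies it, up to the dimensional normalisation suppressed in the theorem statement, with $f_\lambda^O(\eta/N)$.

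The main obstacle is Stage 2, the dominance of the term $\mu=N\lambda$. In the Hermitian case the Weyl character formula and the HCIZ integral supply a closed-form saddle-point analysis, but zonal polynomials admit no comparable explicit expression, so one either has to combine James's characterisation of $Z_\mu$ as an eigenfunction of the Laplace--Beltrami operator on $GL_n(\mathbb{R})/O_n$ with WKB asymptotics, or else bypass dominance entirely. I would attempt the latter first: the discrete measures $\sum_\eta (a_\eta^N/Z_{N\lambda}(I_n))\,\delta_{\eta/N}$ on $PH_\lambda$ are probability measures (since $Z_{N\lambda}(I_n)=\sum_\eta a_\eta^N$) whose integer moments against $x^\gamma$ are controlled, via the calculation above, by the zonal integrals. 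A standard moment-problem argument on the compact set $PH_\lambda$, together with the almost-everywhere continuity of $f_\lambda^O$, should then force pointwise convergence at every continuity point.
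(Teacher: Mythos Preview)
Your Bernstein/Dirichlet step---recognising that the coefficient of $x^\eta$ in $\int_{O_n}(\mathrm{tr}\,D_x D_\lambda^o)^N\,do$ equals $\binom{N}{\eta}\int\prod_i(D_\lambda^o)_{ii}^{\eta_i}\,do$ and that this approximates the push-forward density at $\eta/N$---coincides exactly with the paper's first half (equations (8)--(13)). The divergence is in how one passes from $P_\lambda^N(X):=\int_{O_n}\mathrm{tr}^N(D_\lambda^o D_X)\,do$ to the single zonal polynomial $Z_{N\lambda}$. You propose invoking James's expansion $P_\lambda^N=\sum_\mu c_\mu Z_\mu(D_X)Z_\mu(D_\lambda)$ and then arguing that the term $\mu=N\lambda$ dominates; the paper quotes precisely this identity (its equation (9)) but \emph{explicitly declines to use it}, proceeding instead through the tensor space $V^{\otimes N}$. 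It writes $\mathrm{tr}^N(D_\lambda^o D_X)=\mathrm{tr}((D_\lambda^o D_X)^{\otimes N})$, invokes Schur--Weyl duality $V^{\otimes N}=\bigoplus_\eta V_\eta\otimes M_\eta$, and appeals to the Keyl--Werner concentration phenomenon (equation (21)) to localise onto the isotypic components with $\eta/N\sim\lambda$. A short weight-theory lemma (Lemma 1) then shows that, under the bilinear map $\alpha\mapsto Q_\alpha(m)=\mathrm{tr}(\alpha^t\alpha\,m^{\otimes N})$, the relevant eigenvectors land in $\bigoplus_{(f)/N\sim\lambda}V_{(f)}$ in James's decomposition of polynomials on symmetric matrices; the Haar average is then nothing but the projection onto the $O_n$-invariant line in each $V_{(f)}$, i.e.\ onto the zonal polynomials $Z_{(f)}$ with $(f)/N\sim\lambda$.

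What the paper's route buys is that the concentration step is an already-proved theorem with a clean combinatorial argument (cited to [Ch]), whereas your dominance step, as you yourself note, has no zonal substitute for the HCIZ saddle-point analysis. Your fallback moment-problem argument is not obviously a genuine bypass: the moments of the discrete measure $\sum_\eta(a_\eta^N/Z_{N\lambda}(I_n))\,\delta_{\eta/N}$ are not, as written, tied back to the Haar moments without first isolating the $\mu=N\lambda$ term in James's expansion, so the circularity would have to be broken by some independent input.
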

{\bf Remark} Compare to the characterization of the zonal spher polynomials coefficient in [Mac 2.27]page 409

  {\bf Remark} In the Hermitian  case
the zonal sphere polynomials  are replaced by  the Schur functions
in order to obtain  a similar approximation. See[F G] for more details  \\
  \subsection*{\large \bf  Corollaries of the main result}

   Theorem 1 leads to two corollaries we introduce now
  \\
  Let $\gamma= (\gamma_1,\gamma_2,...,\gamma_n)$ be another   vector of rational non negative  numbers   such that
  $\sum \gamma_i =1$, and $D_\gamma=diag(\gamma_1,\gamma_2,...,\gamma_n)$.\\
 Let N be a large integer so that $N\lambda$ and $N\gamma$ are
integral vectors . Let
\begin{equation}
Z_{N\lambda}(X)Z_{N\gamma}(X)=\sum_{\eta\vdash 2N} {C_\eta^{N\lambda,
N\gamma}}Z_\eta( X)\end{equation}
be the
linearization of the product of the two zonal sphere polynomials.(see [Mac] page 409 )\\
\begin{claim}
Using the Haar measure of the orthogonal group as a probability
measure, the Radon-Nikodym derivative at $\frac{\eta}{N}$ of the distribution of the random variable,$\frac{\eta'}{N}$ satisfying the equation
$\frac{oD_\lambda o^t+o'D_\gamma o'^{t}}{2}=D_{\frac{\eta'}{N}}:o,o'\in O_n $, with respect to the relative Euclidian  measure on the simplex, is
 approximated, up to normalization, by
$C_\eta^{N\lambda, N\gamma}$.\\
\end{claim}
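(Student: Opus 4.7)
The plan is to express the density of the diagonal of the sum matrix $M = (oD_\lambda o^t + o'D_\gamma o'^t)/2$ in two different ways and then identify the spectral density by matching the resulting asymptotic expressions.

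First, since $o$ and $o'$ are Haar-independent, the diagonals of $oD_\lambda o^t$ and $o'D_\gamma o'^t$ are independent random vectors on $PH_\lambda$ and $PH_\gamma$, and the diagonal of $M$ has density equal to their (suitably rescaled) convolution. Applying Theorem~1 to each marginal, the density at a lattice point $\mu/N$ is approximated by $a_\mu^{N\lambda}/Z_{N\lambda}(I_n)$ and $a_\mu^{N\gamma}/Z_{N\gamma}(I_n)$ respectively. Convolving, the density of the diagonal of $M$ at the lattice point $\eta/N$ (with $\eta\vdash 2N$) is approximated, up to the overall normalization $1/(Z_{N\lambda}(I_n)\,Z_{N\gamma}(I_n))$, by
$$[X^\eta]\bigl(Z_{N\lambda}(X)\,Z_{N\gamma}(X)\bigr) \;=\; \sum_{\nu\vdash 2N} C_\nu^{N\lambda,N\gamma}\,a_\eta^\nu,$$
where the right-hand side uses the linearization $Z_{N\lambda}\,Z_{N\gamma} = \sum_\nu C_\nu^{N\lambda,N\gamma} Z_\nu$ together with the monomial expansion $Z_\nu(X) = \sum_\eta a_\eta^\nu X^\eta$.

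Second, by the orthogonal invariance of the distribution of $M$, the conditional law of its diagonal given its spectrum $\tau$ is precisely the Haar pushforward $DH_\tau^O$. Consequently the diagonal density decomposes as an average of conditional densities weighted by the spectral density $f_{\mathrm{spec}}$. Applying Theorem~1 once more to approximate the conditional density at $\eta/N$ by $a_\eta^{N\tau}/Z_{N\tau}(I_n)$ and writing $\nu = N\tau$, the diagonal density at $\eta/N$ is also approximated by
$$\sum_{\nu\vdash 2N} f_{\mathrm{spec}}(\nu/N)\,\frac{a_\eta^\nu}{Z_\nu(I_n)}.$$
Matching the two expansions coefficientwise in the vectors $\{a_\eta^\nu\}_\nu$ yields
$$f_{\mathrm{spec}}(\nu/N) \;\approx\; C_\nu^{N\lambda,N\gamma}\,\frac{Z_\nu(I_n)}{Z_{N\lambda}(I_n)\,Z_{N\gamma}(I_n)},$$
which is exactly the statement that the spectral density at $\eta/N$ is approximated, up to the explicit normalization factor $Z_\eta(I_n)/(Z_{N\lambda}(I_n)\,Z_{N\gamma}(I_n))$, by $C_\eta^{N\lambda,N\gamma}$.

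The principal obstacle is the coefficient matching in the last step: the identity between the two expansions holds only asymptotically, so term-by-term identification requires additional input. Rigorously one should either sharpen Theorem~1 to give error control uniform in $\eta$ and then invert the linear system $\nu\mapsto(a_\eta^\nu)_\eta$, which is non-degenerate since the zonal polynomials form a basis of the symmetric polynomials of degree $2N$; or reformulate the argument representation-theoretically, treating the $Z_\nu$ as spherical functions for the Gelfand pair $(GL_n,O_n)$, so that the matching becomes exact at each finite $N$ before the asymptotic approximation is applied. A subsidiary issue is bookkeeping the Jacobian from the division by $2$, which is absorbed into the same explicit normalization.
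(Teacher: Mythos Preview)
Your first step---observing that the diagonal of $M$ is the convolution $DH_\lambda^O * DH_\gamma^O$ and hence its density at $\eta/N$ is approximated by the monomial coefficient $[X^\eta]\bigl(Z_{N\lambda}(X)Z_{N\gamma}(X)\bigr)$---is exactly the paper's own argument, and in fact is \emph{all} the paper offers as justification for the claim. The paper stops there, leaving unexplained how one passes from the diagonal density (monomial coefficients of the product) to the spectral density (linearization coefficients $C_\eta^{N\lambda,N\gamma}$ in the zonal basis).

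Your steps 2 and 3 supply precisely this missing link. The conditioning argument---that by orthogonal invariance the diagonal law of $M$ is a mixture over spectra $\tau$ of the Haar pushforwards $DH_\tau^O$, each of which Theorem~1 again approximates by zonal coefficients---is the natural way to re-expand the diagonal density in the zonal basis and read off $f_{\mathrm{spec}}$. The paper does not make this explicit; it appears to regard it as implicit in ``the main result tells us that it can be approximated by the coefficients of the product,'' tacitly identifying the linearization coefficients with the spectral weights via Theorem~1 applied to each $Z_\nu$ in the expansion.

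The obstacle you flag in the matching step is real and is not addressed in the paper either: equating two asymptotic expansions term-by-term in the $\{a_\eta^\nu\}_\nu$ requires either uniform error control or an exact finite-$N$ identity. Your second suggested route---working with the $Z_\nu$ as spherical functions for the Gelfand pair $(GL_n(\mathbb{R}),O_n)$ so that the decomposition is exact before passing to the limit---is the cleaner fix and is consonant with the paper's own representation-theoretic spirit (cf.\ the role of James's identity in equation~(9), which the paper mentions but avoids). In short, your proposal is a correct elaboration of the paper's sketch, and the gap you identify is one the paper itself leaves open.
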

Simply speaking, the probability  to obtain a matrix with spectra close to $\frac {\eta}{N}; \eta\vdash N$, as the sum of two symmetric matrices of spectra $\lambda$ and $\gamma$, chosen randomly with respect to the Haar measure of the real orthogonal group,  can be evaluated  by the  linearization  coefficients as above \\

 The reason for this is that one can define a measure  on the set \\$\{\frac{1}{2}((D_\lambda^o+D_\gamma^{o'})_{11},(D_\lambda^o+D_\gamma^{o'})_{22},...,(D_\lambda^o+D_\gamma^{o'})_{nn})\}$ \\(using  the notation  $A^o\equiv oAo^t$ ), given by the push forward the Haar measure of $O_n \times O_n$. Since this measure  is $DH_\lambda^o*DH_\gamma^o$ (the convolution of the two measures),the main result tells us that it can be approximated  by the coefficients of the product $Z_{N\lambda}(X)Z_{N\gamma}(X)$\\
{\bf Remark} In the Hermitian case, the
Littlewood Richardson coefficients
 replace the coefficients  $C_\eta^{N\lambda,N\gamma}$ in (4). See [F G] for more details\\
The second corollary   deals with the real  symmetric version of the
Zuber Itzykson Harish Chandra integral.\\
  Assume that C is a real
symmetric matrix. For  $\lambda$ and $D_\lambda$ as in (1), define
\begin{equation}I_\lambda (C)\equiv\int exp (trace(CD_\lambda^o))dO
\end{equation}
integrated over $O_n$ with respect to its Haar measure.\\
 \\ Without loss  of generality, one can assume that
C is a diagonal matrix. Now, the integral may be  taken over $PH_\lambda$,
with
respect to $DH^o_\lambda$. \\
 $C_1,C_2...C_n$ be the spectra of C, then for arbitrary N we have
 \begin{equation}I_\lambda (C)=\int_{PH_\lambda}\prod (exp \frac{C_i}{N})^{N{(D^o_\lambda)_{ii}}} d(DH_\lambda^O)\end{equation}  By  using Riemann-sums
 to approximate the integration over $PH_\lambda$, computing  the integrand at the points of type
 $\frac{\gamma}{N}:\gamma \vdash N$ in  $PH_\lambda$,  and then by the main result approximate  the density function of
 $DH_\lambda^o$ at those points using  the coefficients of the zonal sphere polynomial $Z_{N\lambda}(X)$  we obtain the following.\\

\begin{claim} If $\lambda $ is a unit partition as given above,then
 \begin{equation}
  I_\lambda (C)\sim \frac{1}{Z_{N \lambda}(I_n)}Z_{N \lambda}(exp\frac
 {C_1}{N},exp\frac
 {C_2}{N},...exp \frac {C_n}{N})\end{equation}
 \end{claim}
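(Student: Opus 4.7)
The plan is to interpret the right-hand side as a Riemann-sum approximation to the integral in equation (6), with the unknown density of $DH_\lambda^O$ replaced by the explicit discrete approximation furnished by Theorem~1.

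First, using orthogonal invariance of Haar measure on $O_n$, I diagonalize $C$. Writing $C = O^{t}\mathrm{diag}(C_1,\ldots,C_n)O$ and substituting $o \mapsto Oo$ shows $I_\lambda(C) = I_\lambda(\mathrm{diag}(C_1,\ldots,C_n))$. Once $C = \mathrm{diag}(C_1,\ldots,C_n)$, the trace reduces to $\sum_i C_i (D_\lambda^o)_{ii}$, depending on $o$ only through the diagonal of $D_\lambda^o$, so the integral descends to the pushforward $DH_\lambda^O$ on $PH_\lambda$, yielding equation~(6).

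Second, I invoke Theorem~1. At each continuity point $\eta/N$ of the density, the coefficient ratio $a_\eta^N/Z_{N\lambda}(I_n)$ approximates the Radon--Nikodym derivative of $DH_\lambda^O$ with respect to the Euclidean measure on $PH_\lambda$. With the natural normalization the family $\{a_\eta^N/Z_{N\lambda}(I_n)\}_{\eta \vdash N}$ defines a discrete probability distribution supported on the lattice points $\eta/N \in PH_\lambda$, whose weak limit as $N \to \infty$ is $DH_\lambda^O$. Since the integrand $\prod_i (e^{C_i/N})^{Nx_i}=\exp(\sum_i C_i x_i)$ is smooth and bounded on the compact set $PH_\lambda$, pairing the weak convergence with this test function and recognizing the resulting sum through the monomial expansion (3) gives
$$I_\lambda(C) \sim \sum_{\eta\vdash N} \frac{a_\eta^N}{Z_{N\lambda}(I_n)}\prod_i (e^{C_i/N})^{\eta_i} = \frac{1}{Z_{N\lambda}(I_n)}\, Z_{N\lambda}\bigl(e^{C_1/N},\ldots,e^{C_n/N}\bigr),$$
which is the stated claim.

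The principal obstacle is making the passage from pointwise density approximation to integral (weak) convergence rigorous. Theorem~1 supplies only pointwise control at continuity points, while the density may blow up along the union of affine subspaces described in the preliminaries. I would excise an $\varepsilon$-tube around this singular locus: because the locus has positive codimension in $PH_\lambda$ and the integrand is uniformly bounded, the excised contribution on the continuous side is $O(\varepsilon)$, while on the discrete side it is controlled by the mass that $\{a_\eta^N/Z_{N\lambda}(I_n)\}$ assigns to the tube, which in turn vanishes with $\varepsilon$ by Theorem~1. On the regular region, equicontinuity of the integrand combined with the pointwise convergence furnished by Theorem~1 yields Riemann-sum convergence via the standard estimate over a $(n-1)$-dimensional simplex at lattice spacing $1/N$, completing the proof.
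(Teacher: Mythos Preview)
Your proposal is correct and follows essentially the same approach as the paper: reduce $I_\lambda(C)$ to an integral over $PH_\lambda$ with respect to $DH_\lambda^O$ (equation~(6)), approximate by a Riemann sum over the lattice points $\eta/N$, replace the density at those points by $a_\eta^N/Z_{N\lambda}(I_n)$ via Theorem~1, and recognize the resulting sum as the zonal polynomial evaluated at $(e^{C_1/N},\ldots,e^{C_n/N})$. Your $\varepsilon$-tube argument for the singular locus is a more detailed version of what the paper dispatches in a one-line remark appealing to the standing assumptions on $DH_\lambda^O$.
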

 {\bf Remark} One can ignore the points of $PH_\lambda$ in which the Radon Nicodym derivation is not continues thanks to our assumptions on $DH_\lambda$.\\
 {\bf Remark} In the Hermitian version of this process, one may replace  the zonal
 sphere
 polynomial in the right hand side  with the Schur function with respect to the same parametrization.
 The very elegant  determinant formula of  Harish Chandra Zuber Itzykson is accepted as  the limit
    of the    determinants formula for the corresponding  sequence of  Schur functions. Unfortunately,  the zonal sphere polynomial, do not yield such nice expression for their computations  \\
\subsection*{\large \bf Proof of the main result}

\begin{proof}
Let us define a sequence of polynomials as follows:
\begin{equation} P_\lambda^N(X)=\int_{O_n} trace^N (D^o_\lambda D_X)do \end{equation} once integrated over
the orthogonal group with
respect to its Haar measure,\\
and X is a vector of indeterminants  $X_1,X_2,...X_n$.\\
{\bf Remark} In [J 2] Identity\\
\begin{equation}P_\lambda^N(X)=\sum_{\eta \vdash N}
c_\eta Z_\eta(D_X) Z_\eta(D_\lambda) \end{equation} was obtained, and the coefficients $c_\eta$ were investigated .However  we avoid it in our treatment  .\\ Now we rewrite
(8)as
\begin{equation} trace^N(D^o_\lambda D_X) =(\sum (D^o_\lambda)
_{ii}X_i)^N\end{equation}
 One can observe  the similarity  between the sequence of polynomials,
$P_\lambda^N(X)$, and  the sequence of  Bernstein polynomials
approximating the density of $ DH^o_\lambda$ (see [Lo] page 54)\\
More explicitly, by writing
\begin{equation}
P_\lambda^N(X)=\sum_{\eta \vdash N}a_\eta^N X^\eta
\end{equation}
and combining with (10), we obtain
\begin{equation}
a_\eta^N=\int  (^N_\eta)\prod_i{({D_\lambda}^o)_{ii}}^{\eta_i}dO
\end{equation}
Where the integral is  over the orthogonal group with respect to its Haar
measure.\\
Let $Y=Y_1,Y_2,...Y_n$ be a real vector   such that $\sum_i Y_i=1 : Y_i\geq 0$.\\
The next polynomials
\begin{equation}
  (^N_\eta)\prod_i{Y_i}^{\eta_i}
\end{equation}
are  special members  of the family of  "Dirichlet distributions", so the integrand of equation (12)
  tends to  $\delta_{\frac{\eta}{N}}$
 (the delta function concentrated
  around the points $\frac{\eta}{N}$) over the n-1 dimensional simplex, as N tends to infinity\\

  Therefore, as N tends to infinity, one finds that  the coefficients  $a_\eta^N$ tend   to the density of
  $DH_\lambda^o$ at the points $\frac{\eta}{N}$ in which it  is continuous.\\Next,
  we treat the left hand side of (10) by
writing
\begin{equation}
trace^N (D^o_\lambda D_X)=trace((D^o_\lambda D_X)^{\otimes
N})\end{equation} Let V denotes an n dimensional vector space over
the reals, and let $v_1 ,v_2,..v_n$ be an orthogonal basis of V.\\
Let us define the $\eta$ weight space for any partition $\eta$ of N, of no more than n
parts, by the formula
\begin{equation}{V^{\otimes N}}_\eta= span \{v_{i_1}\otimes v_{i_2}\otimes...\otimes
v_{i_N}: \#\{l : i_l=j\}=\eta_j\} \end{equation}  Now because  $D_\lambda $ is a
diagonal matrix, we have

\begin{equation}trace^N (D_\lambda D_X)=trace((D_\lambda D_X)^{\otimes N})=\sum_\eta
trace(D_\lambda D_X)^{\otimes N})|_{{V^{\otimes
N}}_\eta}\end{equation}\\and
\begin{equation}trace((D_\lambda D_X)^{\otimes N})|_{{V^{\otimes N}}_\eta}= \left(
\begin{array}{l} N\\ \eta \end{array} \right)\prod \lambda_i^{\eta_i}X_i^{\eta_i}\end{equation}
Now, using  spectral factorization of real symmetric operators, one obtain that
\begin{equation}(D_\lambda)^{\otimes N}=\sum_{\eta\vdash N}\gamma_\eta\sum \alpha_\gamma
^t\alpha_\gamma \end{equation} where the second summation runs  over
a basis of the the eigenspace   of weight $\eta$ with eigenvalue $\gamma_\eta=\prod\lambda_i^{\eta_i}$, \\
(One can choose the basis  as the standard unit vectors in $V^{\otimes N}_\eta$).\\

Now,one can use the central limit law  (using equation (17))  to  write \begin{equation}
P_\lambda^N(X)\sim \int_{O_n}\sum_{\eta\vdash N}\gamma_\eta \sum_\gamma trace (
(o^{\otimes N} \alpha_\gamma )(o^{\otimes N}\alpha_\gamma)^t D_X^{\otimes N} )do
\end{equation}.
\\summed over $\eta \vdash N $ such that $\frac{\eta}{N}\sim
\lambda$.\\
The first $\sim$ in the  equation means  that
contribution of the other summands to the coefficients $a_\eta^N$  vanishes to zero.
  The second $\sim$  denotes the closeness  of real vectors at
any coordinate.
\\ To get closer to our goal  one may apply representation
theory of $Gl_n(R)$\\
 By the  Schur Weyl duality we have \\
\begin{equation}V^{\otimes N} =\bigoplus V_\eta \otimes M_\eta
\end{equation}where the sum is over $\eta\vdash N $ of no more than n parts,\\
 and $V_\eta(M_\eta)$ are the $Gl_n(R)(S_N)$ simple
modules in their actions on the tensor product space.See [G W] page 138  for more details\\The
striking fact is that, as  N tends to infinity
\begin{equation}
trace^N (D_\lambda D_X)\sim\sum_{\frac{\eta}{N}\sim \lambda} trace ((D_\lambda D_X)^{\otimes
N}|_{V_\eta \otimes M_\eta })
\end{equation}
     (see the remarks on$\sim$ after (19)).\\ The discovery of this  concentration phenomena is, some
  times, attributed  to
Kyel and Werner  and has been  rediscovered several  times. (See [Ch] for a nice proof of it).\\
{\bf Remark} At this point, one can easily see the
full Hermitian perspective    of the problem. See [F G] for more details . For  symmetric real matrices one has more work to do.\\

Using (19) and (21), one can write
\begin{equation} P_\lambda^N(X)\sim \int\sum_\gamma c_\gamma (trace ( (o^{\otimes N} \alpha_\gamma
)( o^{\otimes N}\alpha_\gamma)^t D_X^{\otimes N} ))do \end{equation} summed over a basis of eigenvectors (of eigenvalues $c_\gamma$) of $D_\lambda^{\otimes N}$ in
$ (V_{\delta'}\otimes M_{\delta'})\bigcap {(V^{\bigotimes N})}_\delta,$ where $\frac{\delta}{N} ,\frac{\delta'}{N}\sim \lambda$.\\

Now, for each vector $\alpha $ in $V^{\otimes N}$, we define a polynomial
by the formula
\begin{equation} Q_\alpha(m)=trace( \alpha^t\alpha m^{\otimes N})\end{equation}
for any n by n  matrix m\\ Before introducing  the next lemma, we quote
  the definition of zonal sphere polynomials  from the first page of  [J 2]\\
"... Denote by $V_f$ the space of polynomials of degree f over the
real
symmetric positive definite n by n matrices.\\ $GL_n(R)$ acts on$V_f$ as follows \\ (For $g\in GL_n(R),P\in V_f: g (P(m))=P(gmg^t)$).\\
As $GL_n(R)$ module $V_f=\bigoplus_{(f)} V_{(f)}$ where
$(f)=f_1,f_2...f_n$ is a partition of f and $V_{(f)}$ is the subspace of $V_f$
on which $GL_n(R)$ acts  irreducibly corresponding to the partition 2(f) \\
The zonal polynomial $Z_f$ spans   the  real orthogonal group
invariant  vector space in $V_{f}$... "\\To complete the proof of Theorem 1 we need the following lemma
\begin{lemma} Using the notations from [J2], for $\alpha_\gamma$ as in (22),we have that $Q_{\alpha_\gamma}(m) \in \bigoplus V_{(f)}$ summed over (f) such that $\frac{(f)}{N}\sim
\lambda$.

\end{lemma}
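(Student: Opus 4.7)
My plan is to exploit the $GL_n(\mathbb{R})$-covariance of the construction $\alpha \mapsto Q_\alpha$, then combine an upper bound on the $GL_n$-type of $Q_{\alpha_\gamma}$ (coming from the $GL_n$-type of $\alpha_\gamma$) with a lower bound (coming from its torus weight), and finish by a dominance-order squeeze using the hypothesis $\delta/N,\delta'/N \sim \lambda$.

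First I would linearize: set $q(\alpha,\beta)(m) := \alpha^t m^{\otimes N} \beta$, a symmetric bilinear form on $V^{\otimes N}$ valued in $V_N$, so that $Q_\alpha = q(\alpha,\alpha)$. A direct calculation gives
\begin{equation*}
q(g^{\otimes N}\alpha,\, g^{\otimes N}\beta)(m) \;=\; \alpha^t (g^t m g)^{\otimes N} \beta \;=\; q(\alpha,\beta)(g^t m g),
\end{equation*}
so $q$ is $GL_n(\mathbb{R})$-equivariant, with the Schur--Weyl action $g\mapsto g^{\otimes N}$ on the source and congruence by $g^t$ on the target. Combined with the decomposition $V_f = \bigoplus_{(f)} V_{(f)}$ recalled from [J 2] (where $V_{(f)}$ is the $GL_n$-irreducible of highest weight $2(f)$, appearing with multiplicity one), this sets up a rigid dictionary between the $GL_n$-type of $q$-inputs and the $V_{(f)}$-components of $q$-outputs.

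Next I would extract two constraints on the $(f)$'s that can carry a nonzero component of $Q_{\alpha_\gamma}$. For the \emph{upper bound}: since $\alpha_\gamma \in V_{\delta'} \otimes M_{\delta'}$, equivariance puts $q(\alpha_\gamma,\alpha_\gamma)$ into the image of $V_{\delta'}\otimes V_{\delta'}$, and by the Littlewood--Richardson rule every irreducible of $V_{\delta'}\otimes V_{\delta'}$ has highest weight $\mu \leq 2\delta'$ in dominance order, so intersecting with the target decomposition forces $(f) \leq \delta'$. For the \emph{lower bound}: $\alpha_\gamma \in (V^{\otimes N})_\delta$ means $\alpha_\gamma$ has torus weight $\delta$, whence $Q_{\alpha_\gamma}$ has torus weight $2\delta$ under the congruence action (because $Q_\alpha(tmt)=(\prod t_i^{2\delta_i})Q_\alpha(m)$); since all weights of $V_{(f)}$ are dominated by $2(f)$, a nontrivial $V_{(f)}$-component requires $\delta \leq (f)$. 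Combining, only those $(f)$ with $\delta \leq (f) \leq \delta'$ can contribute.

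To finish, the dominance sandwich
\begin{equation*}
\sum_{j \leq i}\frac{\delta_j}{N} \;\leq\; \sum_{j \leq i}\frac{(f)_j}{N} \;\leq\; \sum_{j \leq i}\frac{\delta'_j}{N}
\end{equation*}
together with $\delta/N,\delta'/N \sim \lambda$ forces each partial sum of $(f)/N$ to track the corresponding partial sum of $\lambda$; differencing yields $(f)/N \sim \lambda$ coordinate-wise, which is the claim. I expect the most delicate step to be cleanly identifying the congruence-irreducibles of [J 2] with the $V_{(f)}$ of the lemma, i.e.\ verifying the multiplicity-one decomposition of $V_N$ with highest weights $2(f)$ under the anti-involution $g\mapsto g^t$; once this representation-theoretic input is secured, the equivariance step and the dominance squeeze are routine.
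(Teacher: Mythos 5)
Your proposal is correct and follows essentially the same route as the paper: the paper likewise factors $\alpha\mapsto\alpha^t\alpha$ through $\alpha\otimes\alpha$, bounds the contributing highest weights above by the decomposition $V_{\delta}\otimes V_{\delta}=\bigoplus_{\omega\preceq 2\delta}V_\omega$ and below by the torus weight $2\delta'$ of $\alpha_\gamma\otimes\alpha_\gamma$ (weights of a highest-weight module being dominated by the highest weight), and then squeezes using $\delta/N,\delta'/N\sim\lambda$. Your write-up is in fact slightly cleaner in keeping the roles of the torus weight $\delta$ and the isotype $\delta'$ distinct, and in flagging the identification of James's congruence-irreducibles with the $V_{(f)}$ of highest weight $2(f)$, which the paper takes from [J 2] without comment.
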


\begin{proof}
 One need to apply some basic representation theory of $gl_n(R)$, the Lie algebra
 of $Gl_n(R)$ . [G W] page 228 is good reference  to refer.
We note that the weights  referred  to in (15) and the weights
in the context of Lie algebras representation theory coincide for an appropriates choice of basic positive  roots vectors.

 Now, since
the map  $\alpha \mapsto \alpha^t\alpha$ is bilinear, it can
be factorized through the tensor product.  $\alpha\mapsto \alpha\bigotimes\alpha\mapsto \alpha^t\alpha$. \\
Furthermore  for $\delta\vdash N$ we have$V_\delta\bigotimes V_\delta =\bigoplus
V_\omega$, summed over $\omega \preceq 2\delta$, where $\preceq$ is
the dominant order defined on the weight lattice ( see page
237 in [G W])\\ Since $\alpha_\gamma$ is of weight $\delta'$,
$\alpha_\gamma \bigotimes\alpha_\gamma$ is of weight $2\delta'$, and
so belongs to $\bigoplus V_\omega$ summed on $\omega$ in the next small fragment:
$2N(\lambda-\epsilon)\preceq 2\delta'\preceq\omega\preceq
2N(\lambda+\epsilon')$,  where $\epsilon,\epsilon' $ are arbitrarily small positive vector
,  as N tends to infinity . The central inequality
just expressed the fact that the weights of vectors in a highest weight modules
are smaller than its highest weight.
\end{proof}
  Given Lemma 1, Theorem 1 is now proved since
  the integral in (22) is just a projection onto the space spanned
by the   unique zonal polynomial in each $V_(f)$.

\end{proof}
\subsection*{\large \bf Numerical Experiments:}

We  test the $3\times 3$ case for the diagonal matrix $diag(0,1,2)$, conjugated by (a) $SO(3)$, and (b) $SU(3)$ (Figure 1). In each case we sample  30,000 conjugations,
and compute the resulting diagonal. In both cases we have a two-dimensional distribution, based on the $(1,1)$ and $(2,2)$ positions of the diagonal.
The $(3,3)$ entry is determined from the first two by the trace condition. .\\
We have also  test  conjugations of rank 1 matrices \\
We  sample   the diagonals of a $3\times 3$  matrix of rank 1  conjugated by $SO(3)$ random matrices (See Fig 2 (a))
On the other hand we   sample the zonal sphere  polynomial's coefficients  corresponding to one row (using example 1 of[Mac]Page 410)(See Fig 2 (b))\\
In Figure 3 the coefficients of the zonal sphere polynomial of two variables  corresponding to one row (the red points), have been compared with the function; $ xy^{-\frac{1}{2}},x+y=constant$,\\ which is computed in [FG] explicitly for two by two symmetric matrices
for that case. In the figures, we use the notation $J^r_s$ as employed in [Mac, p. 385] for the Jack-polynomials. Note that the $r=2$ case corresponds to the zonal polynomials.

\begin{figure}[H]
\hfill
\subfigure[Conjugation by $SO(3)$]{\includegraphics[width=5cm]{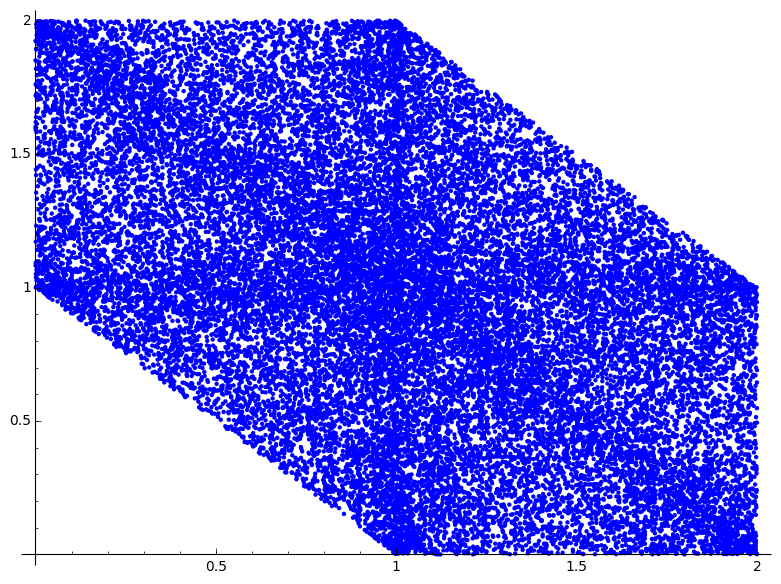}}
\hfill
\subfigure[Conjugation by $SU(3)$]{\includegraphics[width=5cm]{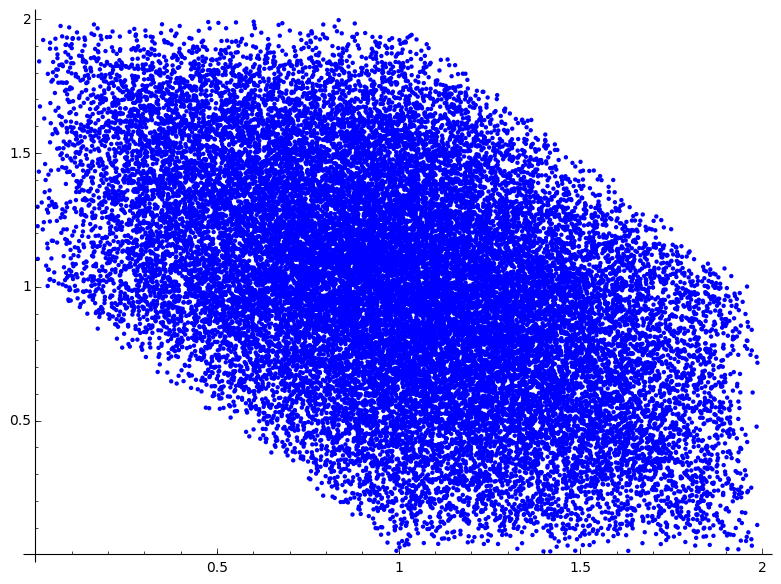}}
\hfill
\caption{}
\end{figure}

\begin{figure}[H]
\hfill
\subfigure[Rank 1: Conjugation by $SO(3)$]{\includegraphics[width=5cm]{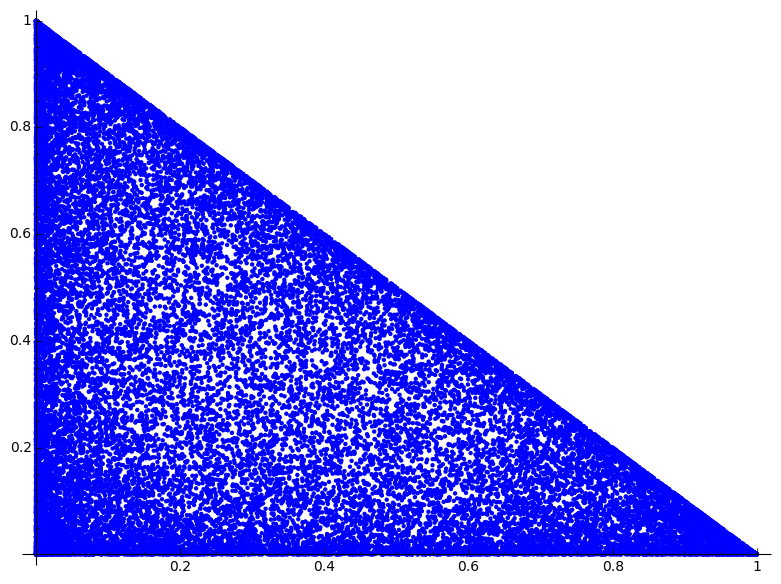}}
\hfill
\subfigure[Zonal Polynomial's coefficients  $J^2_{40}$]{\includegraphics[width=5cm]{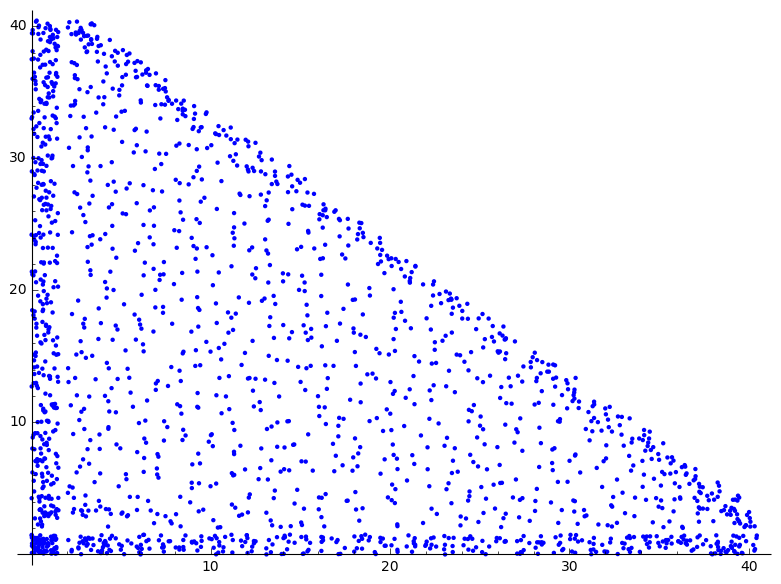}}
\hfill
\caption{}
\end{figure}

\begin{center}
\begin{figure}[H]
\hfill
\includegraphics[width=5cm]{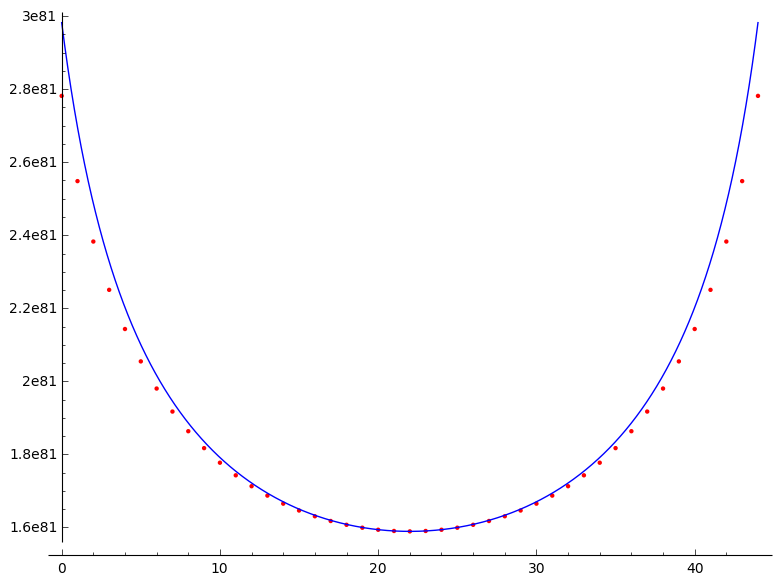}
\hfill
\hfill
\caption{Zonal coefficients of $J^2_{50}$(dotted) against  theoretical prediction}
\end{figure}
\end{center}

\section*{References}
\begin{itemize}
\item [{[J 1]}] Alan.T. James.  Zonal polynomials of the real positive
definite symmetric matrices Annals of math vol 74 no 3 November 1961\\
\item[{ [J 2]}] Alan.T.James. The distribution of the latent roots of the
 covariant matrix. Annals of math statist vol 31number 1(1960)\\
 \item [{[Mac]}] I,G.Macdonald.  Symmetric functions and Hall polynomials .
 Second edition Oxford mathematical monographs\\
\item[{ [G W]}] Roe Goodman and Nolan.R.Wallach. Representations and
invariants of the classical groups
 Cambridge U.press 1998
\item[{[Ch]}]M. Christandl, A. Harrow, G. Mitchison. Non-zero
Kronecker coefficients and consequences for spectra.  Communication
in Math Physics, 270(3) (2007), 575-585.
\item [{[Lo]}] G.G. Lorentz. Bernstein polynomials. Oxford clarendon
press(1979)
\item[{[FG]}] A.Frumkin, A.Goldberger.  On the distribution of the spectrum of the sum of two Hermitian or real symmetric matrices.
Advances in Appl. Math. 37 (2006), 268-286.
\end{itemize}

\end{document}